\documentclass{amsart}
\usepackage{setspace}
\usepackage{a4}
\usepackage{amsthm}
\usepackage{latexsym}
\usepackage{amsfonts}
\usepackage{graphicx}
\usepackage{textcomp}
\usepackage{cite}
\usepackage{enumerate}
\usepackage{amssymb}
\usepackage{hyperref}
\usepackage{amsmath}
\usepackage{tikz}
\usepackage[mathscr]{euscript}
\usepackage{mathtools}
\newtheorem{theorem}{Theorem}[section]

\newtheorem{corollary}[theorem] {Corollary}
\newtheorem{definition}[theorem]{Definition}
\newtheorem{example}[theorem]{Example}

\newtheorem{problem}[theorem]{Problem}

\title{This is the title}
\usepackage{fancyhdr}

\begin{document}
\hrule\hrule\hrule\hrule\hrule
\vspace{0.3cm}	
\begin{center}
{\bf{Noncommutative Spherical Codes}}\\
\vspace{0.3cm}
\hrule\hrule\hrule\hrule\hrule
\vspace{0.3cm}
\textbf{K. Mahesh Krishna}\\
School of Mathematics and Natural Sciences\\
Chanakya University Global Campus\\
NH-648, Haraluru Village\\
Devanahalli Taluk, 	Bengaluru  North District\\
Karnataka State, 562 110, India \\
Email: kmaheshak@gmail.com

Date: \today

\end{center}

\hrule\hrule
\vspace{0.5cm}

%--------------------------------------
\textbf{Abstract}: Spherical codes, with a rich history spanning nearly five centuries, remain an area of active mathematical exploration and are far from being fully understood. These codes, which arise naturally in problems of geometry, combinatorics, and information theory, continue to challenge researchers with their intricate structure and unresolved questions. Inspired by Polya’s heuristic principle of “vary the problem,” we extend the classical framework by introducing the notion of noncommutative spherical codes, with particular emphasis on the noncommutative Newton–Gregory kissing number problem. This generalization moves beyond the traditional Euclidean setting into the realm of operator algebras and Hilbert C*-modules, thereby opening new avenues of investigation. A cornerstone in the study of spherical codes is the celebrated Delsarte–Goethals–Seidel–Kabatianskii–Levenshtein linear programming bound, developed over the past half-century. This bound employs Gegenbauer polynomials to establish sharp upper limits on the size of spherical codes, and it has served as a fundamental tool in coding theory and discrete geometry. Remarkably, a recent elegant one-line proof by Pfender [\textit{J. Combin. Theory Ser. A, 2007}] provides a streamlined derivation of a variant of this bound. We demonstrate that Pfender’s argument can be extended naturally to the setting of Hilbert C*-modules, thereby enriching the theory with noncommutative analogues.

\textbf{Keywords}:  Spherical code, Kissing number, Linear programming, Hilbert C*-module.

\textbf{Mathematics Subject Classification (2020)}: 94B65, 52C17, 52C35, 46L08.

\hrule

%\tableofcontents
\hrule
\section{Introduction}
A finite set of points distributed on the surface of a unit sphere in a Euclidean space, and satisfying specific criteria, is a concept with profound implications across numerous scientific disciplines, including Mathematics, Physics, Chemistry, Engineering, and Biology. This topic represents one of the most enduring areas of scientific inquiry.
Several classic problems focus on the study of such finite point arrangements on a sphere. Few are given below. 
\begin{enumerate}[\upshape (i)]
	\item Tammes Problem: This problem investigates how to arrange a given number of points on a sphere such that the minimum distance between any two points is maximized.
\item Thomson Problem: Also known as Problem 7 in Steve Smale's list of ``Mathematical Problems for the Next Century," this problem seeks the configuration of a given number of points on a sphere that minimizes the total electrostatic potential energy.
\item Spherical Designs Problem: This area deals with finding sets of points that provide highly uniform distribution on the sphere, such that they accurately approximate the integration of polynomials over the sphere's surface.
\item Spherical Codes Problem: Similar to the Tammes problem, this problem aims to maximize the minimum angular separation (or distance) between points, relevant for applications in communication and coding theory.
\item Equiangular Lines Problem: This problem involves finding the maximum number of lines passing through the origin that are all pairwise separated by the same angle.
\end{enumerate}
In this article, we are concerned about spherical codes. We recall the definition.
Let $d\in \mathbb{N}$ and $\theta \in [0, 2\pi)$. A set $\{\tau_j\}_{j=1}^n$  of unit vectors  in $\mathbb{R}^d$	is said to be a \textbf{$(d,n,\theta )$-spherical code} \cite{ZONG} in $\mathbb{R}^d$ if 
 	\begin{align}\label{SCI}
 	\langle \tau_j, \tau_k\rangle\leq \cos \theta , \quad \forall 1\leq j, k \leq n, j \neq k.
 \end{align}
Since 
\begin{align*}
	\langle \tau, \omega\rangle =\frac{2-\|\tau-\omega\|^2}{2}, \quad \forall \tau, \omega \in \mathbb{R}^d, 
\end{align*}
we can rewrite Inequality (\ref{SCI}) as 
\begin{align*}\label{SCN}
	\|\tau_j-\tau_k\|\geq  \sqrt{2(1-\cos \theta)}, \quad \forall 1\leq j, k \leq n, j \neq k.	
\end{align*}	
 Fundamental problem associated with spherical codes is the following.
 \begin{problem}\label{SCP}
 	Given $d$ and $\theta$, what is the maximum $n$ such that there exists a $(d,n,\theta )$-spherical code $\{\tau_j\}_{j=1}^n$  in $\mathbb{R}^d$?
 \end{problem}
 The case $\theta=\pi/3$ is known as the famous \textbf{(Newton-Gregory) kissing number problem} (KNP). KNP  is still not completely resolved in every dimension (but resolved in dimensions $d=1$ ($n=2$), $d=2$ ($n=6$), $d=3$ ($n=12$), $d=4$ ($n=24$), $d=8$ ($n=240$), $d=24$ ($n=196560$)) \cite{ANSTREICHER, PFENDER, CASSELMAN, MUSIN1, MUSIN2, ODLYZKOSLOANE,  SCHUTTEVANDERWAERDEN,  PFENDERZIEGLER, BACHOCVELLENTIN, MITTELMANNVALLENTIN, BOYVALENKOVDODUNEKOVMUSIN, BOROCZKY, MAEHARA, GLAZYRIN, LIBERTI, LEECH, KALLALKANWANG, JENSSENJOOSPERKINS, MACHADOFILHO, KUKLIN}. We refer  \cite{DELSARTEGOETHALSSEIDEL, BOYVALENKOVDRAGNEVHARDINSTOYANOVA, BARGMUSIN, BOYVALENKOVDRAGNEVHARDINSTOYANOVA2, ERICSONZINOVIEV, SARDARIZARGAR, MUSIN3, BANNAISLOANE, BACHOCVALLENTIN2, CONWAYSLOANE, COHNJIAOKUMARTORQUATO, SAMORODNITSKY, BOYVALENKOVDANEV, BOYVALENKOVDANEVLANDGEV, SLOANE, BANNAIBANNAI, BOROCZKYGLAZYRIN}
  for more on spherical codes. Problem \ref{SCP} has connection even with important sphere packing  problem \cite{COHNZHAO}. Repeatedly  used method for obtaining upper bounds on spherical codes is the Delsarte-Goethals-Seidel-Kabatianskii-Levenshtein
bound. It is obtained using Gegenbauer polynomials. Let $n \in \mathbb{N}$ be fixed. The Gegenbauer polynomials are defined inductively as 
\begin{align*}
	G_0^{(n)}(r)&\coloneqq1,\quad \forall r \in [-1,1],\\
	G_1^{(n)}(r)&\coloneqq r,\quad \forall r \in [-1,1],\\
	&\quad\vdots\\
	G_k^{(n)}(r)&\coloneqq\frac{(2k+n-4)rG_{k-1}^{(n)}(r)-(k-1)	G_{k-2}^{(n)}(r)}{k+n-3}, \quad \forall r \in [-1,1], \quad \forall k \geq 2.
\end{align*}
Then the family $\{G_k^{(n)}\}_{k=0}^\infty$ is orthogonal on the interval $[-1,1] $ with respect to the weight 
\begin{align*}
	\rho(r)\coloneqq (1-r^2)^\frac{n-3}{2}, \quad \forall r \in [-1,1].
\end{align*}
\begin{theorem} \cite{DELSARTEGOETHALSSEIDEL, ERICSONZINOVIEV} (\textbf{Delsarte-Goethals-Seidel-Kabatianskii-Levenshtein Linear Programming Bound}) \label{DGS}
	Let $\{\tau_j\}_{j=1}^n$  be a  $(d,n,\theta )$-spherical code in $\mathbb{R}^d$. 
	Let $P$ be a real polynomial satisfying following conditions.
	\begin{enumerate}[\upshape(i)]
		\item $P(r)\leq 0$ for all $-1\leq r\leq \cos \theta$.
		\item Coefficients in the  Gegenbauer expansion 
		\begin{align*}
			P=\sum_{k=0}^{m}a_kG_k^{(n)}
		\end{align*}
	satisfy 
	\begin{align*}
a_0>0, \quad a_k\geq 0, ~\forall 1\leq k \leq m.
	\end{align*}
	\end{enumerate}
Then
\begin{align*}
	n \leq \frac{P(1)}{a_0}.
\end{align*}
\end{theorem}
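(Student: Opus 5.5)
The plan is to prove the bound by double counting $S\coloneqq\sum_{j=1}^n\sum_{k=1}^n P(\langle \tau_j,\tau_k\rangle)$, bounding it from above with condition (i) and from below with condition (ii). (The Gegenbauer index written $G_k^{(n)}$ in the statement should be read as the dimension parameter $d$, i.e.\ the polynomials attached to $S^{d-1}\subseteq\mathbb{R}^d$; this is the family for which positive definiteness holds.)

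\emph{Upper bound.} Split $S$ into diagonal and off-diagonal terms. Since each $\tau_j$ is a unit vector, $\langle\tau_j,\tau_j\rangle=1$, so the diagonal contributes $nP(1)$. For $j\neq k$ we have $-1\leq\langle\tau_j,\tau_k\rangle\leq\cos\theta$ by Cauchy--Schwarz and the code condition (\ref{SCI}), so condition (i) gives $P(\langle\tau_j,\tau_k\rangle)\leq 0$. Hence $S\leq nP(1)$.

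\emph{Lower bound.} Expand $S=\sum_{l=0}^m a_l\sum_{j,k}G_l^{(d)}(\langle\tau_j,\tau_k\rangle)$. The $l=0$ term equals $a_0n^2$. For $l\geq1$ one needs $\sum_{j,k}G_l^{(d)}(\langle\tau_j,\tau_k\rangle)\geq 0$, i.e.\ positive semidefiniteness of $G_l^{(d)}$ on the sphere (Schoenberg's theorem). I would derive it from the addition formula: if $\{Y_{l,i}\}_{i=1}^{h_l}$ is an orthonormal basis of the degree-$l$ spherical harmonics on $S^{d-1}$, then
\begin{align*}
G_l^{(d)}(\langle x,y\rangle)=c_l\sum_{i=1}^{h_l}Y_{l,i}(x)Y_{l,i}(y), \qquad c_l>0,
\end{align*}
so that the double sum equals $c_l\sum_i\bigl(\sum_j Y_{l,i}(\tau_j)\bigr)^2\geq0$. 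Since $a_l\geq0$ for $l\geq1$, this gives $S\geq a_0n^2$.

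Combining the two bounds gives $a_0n^2\leq nP(1)$, and dividing by $a_0n>0$ yields $n\leq P(1)/a_0$. The main obstacle is the positive-definiteness step. Everything else is bookkeeping, but that step requires the harmonic-analysis input (the addition formula, with the normalization from the stated recurrence giving $G_l(1)=1$ and hence $c_l=|S^{d-1}|/h_l>0$), which I would cite from the references \cite{DELSARTEGOETHALSSEIDEL, ERICSONZINOVIEV} rather than re-derive.
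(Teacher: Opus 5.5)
Your proof is correct. The paper cites Theorem \ref{DGS} without proof, but your diagonal/off-diagonal double counting is exactly the argument it gives for Theorem \ref{PDGSP}: your proof is Theorem \ref{PFENDERT} with $\phi=P-a_0$ and $c=a_0$, where the addition formula supplies hypothesis (i), namely $\sum_{j,k}(P-a_0)(\langle\tau_j,\tau_k\rangle)=\sum_{l\geq 1}a_l\sum_{j,k}G_l^{(d)}(\langle\tau_j,\tau_k\rangle)\geq 0$.

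Reading the index as the dimension $d$ is the intended meaning, since the recurrence and the weight $(1-r^2)^{(d-3)/2}$ are those of $S^{d-1}$. Your parenthetical claim that this is the only family with positive definiteness is too strong, though. $G_l^{(m)}$ is positive definite on $S^{d-1}$ for every $m\geq d$, and even the literal reading with the code size $n$ is valid, because $n$ points span a subspace of dimension at most $n$.
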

 In 2007, Pfender made a breakthrough by giving a  one-line proof for a variant of Theorem  \ref{DGS}.
 \begin{theorem}  \cite{PFENDER} (\textbf{Delsarte-Goethals-Seidel-Kabatianskii-Levenshtein-Pfender Bound}) \label{PFENDERT}
 	Let $\{\tau_j\}_{j=1}^n$  be a  $(d,n,\theta )$-spherical code in $\mathbb{R}^d$. Let $c>0$ and $\phi:[-1,1]\to \mathbb{R}$ be a function satisfying following.
 	\begin{enumerate}[\upshape(i)]
 		\item 
 		\begin{align*}
 			\sum_{j=1}^n\sum_{k =1}^n\phi(\langle \tau_j, \tau_k\rangle)\geq 0.	
 		\end{align*}
 		\item $\phi(r)+c\leq 0$ for all $-1\leq r \leq \cos \theta$.
 	\end{enumerate}
 Then 
 \begin{align*}
 	n\leq \frac{\phi(1)+c}{c}.
 \end{align*}
In particular, if $\phi(1)+c\leq 1$, then $n\leq 1/c$.
 \end{theorem}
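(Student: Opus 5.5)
The plan is the classical one-line double-counting argument: bound the double sum in condition (i) from above by splitting it into diagonal and off-diagonal terms. I will write
\begin{align*}
0\leq \sum_{j=1}^n\sum_{k=1}^n\phi(\langle \tau_j, \tau_k\rangle)=\sum_{j=1}^n\phi(\langle \tau_j, \tau_j\rangle)+\sum_{j\neq k}\phi(\langle \tau_j, \tau_k\rangle).
\end{align*}
Since each $\tau_j$ is a unit vector, every diagonal term equals $\phi(1)$, so the diagonal sum is $n\phi(1)$.

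For the off-diagonal terms, the code property gives $\langle \tau_j,\tau_k\rangle\leq \cos\theta$ for $j\neq k$. By Cauchy--Schwarz we also have $\langle \tau_j,\tau_k\rangle\geq -1$, so each such inner product lies in $[-1,\cos\theta]$. Condition (ii) then yields $\phi(\langle \tau_j,\tau_k\rangle)\leq -c$. There are $n(n-1)$ ordered off-diagonal pairs, so
\begin{align*}
0\leq n\phi(1)-n(n-1)c.
\end{align*}
Dividing by $n>0$ gives $(n-1)c\leq \phi(1)$, i.e.\ $nc\leq \phi(1)+c$. Since $c>0$, this rearranges to $n\leq (\phi(1)+c)/c$.

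For the last statement, if $\phi(1)+c\leq 1$ then $(\phi(1)+c)/c\leq 1/c$, again using $c>0$, so $n\leq 1/c$.

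There is essentially no obstacle. The only points needing care are that the interval in (ii) really contains all off-diagonal inner products, which needs the lower bound $-1$ from Cauchy--Schwarz and not just the code inequality, and that the positivity of $c$ is used when dividing. No positive-definiteness of Gegenbauer polynomials is needed here, because condition (i) is assumed directly.
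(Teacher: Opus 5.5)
Your proof is correct and is essentially the paper's argument: the paper cites this classical statement from Pfender rather than proving it, but its proof of the noncommutative analogue, Theorem~\ref{PDGSP}, uses the same diagonal/off-diagonal split. The only difference is phrasing: the paper works with the shifted function $\psi=\phi+c$ and compares $cn^2\leq\sum_{j,k}\psi\leq n(\phi(0)+c)$, whereas you bound $\sum_{j,k}\phi$ directly, and your remark that Cauchy--Schwarz puts the off-diagonal inner products in $[-1,\cos\theta]$ correctly covers the one detail needed.
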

In this paper, we introduce the notion of noncommutative spherical codes. We show that Theorem  \ref{PFENDERT} has an extension to Hilbert C*-modules.

 \section{Noncommutative Spherical Codes}
 Let $\mathcal{A}$ be a unital C*-algebra. For $d \in \mathbb{N}$, let $\mathcal{A}^d$ be the standard (left) Hilbert C*-module  \cite{WEGGEOLSEN}  equipped with the inner product 
 \begin{align*}
 	\langle (a_j)_{j=1}^d,(b_j)_{j=1}^d\rangle := \sum_{j=1}^da_jb^*_j,  \quad \forall (a_j)_{j=1}^d,(b_j)_{j=1}^d \in \mathcal{A}^d
 \end{align*}
 and the norm 
 \begin{align*}
 	\|(a_j)_{j=1}^d\|:= \left\| \sum_{j=1}^da_ja^*_j\right\|^\frac{1}{2}, \quad \forall (a_j)_{j=1}^d\in 	\mathcal{A}^d.
 \end{align*}
We introduce noncommutative spherical codes  as follows. 
\begin{definition}\label{PCODEDEFINITION}
	Let $d\in \mathbb{N}$ and $\theta \in [0, 2 \pi)$. Let $\mathcal{A}$ be a unital C*-algebra. A set $\{\tau_j\}_{j=1}^n$  of vectors  in $ \mathcal{A}^d$	is said to be a \textbf{noncommutative $(d,n,\theta)$-spherical code} or  \textbf{$(d,n,\theta)$-modular code} in $\mathcal{A}^d$ if following conditions hold.
	\begin{enumerate}[\upshape(i)]
		\item $	\langle \tau_j, \tau_j\rangle=1$ for all  $1\leq j \leq n$.
		\item  
		\begin{equation}\label{NE}
		2-\langle \tau_j, \tau_k \rangle -\langle \tau_k, \tau_j \rangle=\langle \tau_j-\tau_k,  \tau_j-\tau_k\rangle\geq 2(1-\cos \theta), \quad  \forall 1\leq j, k \leq n, j \neq k.
		\end{equation}
		\end{enumerate}
		We call the  case $\theta=\pi/3$  as the \textbf{noncommutative kissing number problem}.
\end{definition}
Note that, even though Inequality (\ref{NE}) reduces to Inequality (\ref{SCI}) whenever $\mathcal{A}=\mathbb{C}$, Inequality (\ref{NE}) is challenging even for commutative C*-algebras. Reason is that we are comparing positive elements in C*-algebras which is difficult to handle than inequalities arising from norm.
\begin{example}
	Let $\{\tau_j\}_{j=1}^d$ be an orthonormal basis for $\mathcal{A}^d$. Then $\{\tau_j\}_{j=1}^n$ is $(d,d,\theta)$-modular code in $\mathcal{A}^d$ for every $ \theta \in [0, \pi/2)$. 
\end{example}
\begin{example}
	Let $\{\tau_j\}_{j=1}^n$ be any collection (in particular,  a frame) in $\mathcal{A}^d$ such that $\langle \tau_j, \tau_j\rangle =1$ for all $1\leq j \leq n$. Choose  $ \theta \in [0, 2 \pi)$  such that $\langle \tau_j-\tau_k,  \tau_j-\tau_k\rangle\geq 2(1-\cos \theta)$ for all $ 1\leq j, k \leq n, j \neq k.$
	Then $\{\tau_j\}_{j=1}^n$ is $(d, n, \theta)$-modular code in $\mathcal{A}^d$.
\end{example}
Let $\{\tau_j\}_{j=1}^n$  be a   noncommutative $(d,n,\theta )$-spherical code  in $\mathcal{A}^d$. Since square root respects the order of positive elements in a C*-algebra, we have 
\begin{align}\label{PN}
	\|\tau_j-\tau_k\|\geq \sqrt{2(1-\cos \theta)},	\quad  \forall 1\leq j, k \leq n,  j \neq k.
\end{align}
However, note that Inequality (\ref{PN}) may not give Inequality   (\ref{NE}). 
Following is the noncommutative version of Theorem \ref{PFENDERT}.
\begin{theorem} (\textbf{Noncommutative Delsarte-Goethals-Seidel-Kabatianskii-Levenshtein-Pfender Spherical Codes Bound}) \label{PDGSP}
Let $\mathcal{A}$ be a unital C*-algebra and $\mathcal{A}^+\coloneqq \{a^*a:a \in \mathcal{A}\}$ be the set of all positive elements in $\mathcal{A}$.	Let $\{\tau_j\}_{j=1}^n$  be a  noncommutative $(d,n,\theta )$-spherical code in $\mathcal{A}^d$. Let $c\in (0, \infty)$ and $\phi:\mathcal{A}^+\to \mathbb{R}$ be a function satisfying following.
	\begin{enumerate}[\upshape(i)]
		\item $\sum_{1\leq j,k \leq n}\phi(\langle \tau_j-\tau_k, \tau_j-\tau_k\rangle)\geq 0$.
		\item $\phi(a)+c\leq 0$ for all $ a\in \mathcal{A}^+ $  with  $ a \geq 2(1-\cos \theta)$.
	\end{enumerate}
Then 
	\begin{align*}
		n\leq \frac{\phi(0)+c}{c}.
	\end{align*}
	In particular, if $\phi(0)+c\leq 1$, then $n\leq 1/c$.	
\end{theorem}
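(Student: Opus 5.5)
We follow Pfender's one-line argument for Theorem \ref{PFENDERT}, with $\langle \tau_j-\tau_k, \tau_j-\tau_k\rangle$ playing the role of the real inner product $\langle \tau_j,\tau_k\rangle$. The idea is to split the double sum in hypothesis (i) into its diagonal and off-diagonal parts. We then bound each part using only the defining properties of a $(d,n,\theta)$-modular code from Definition \ref{PCODEDEFINITION}.

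\textbf{Diagonal terms.} For $j=k$ we have $\tau_j-\tau_j=0$, so $\langle \tau_j-\tau_j, \tau_j-\tau_j\rangle=0\in\mathcal{A}^+$. The diagonal therefore contributes exactly $n\,\phi(0)$.

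\textbf{Off-diagonal terms.} For $j\neq k$, the element $a_{jk}\coloneqq\langle \tau_j-\tau_k, \tau_j-\tau_k\rangle$ is positive, since it has the form $\sum_l x_lx_l^*$. It is therefore a legitimate argument of $\phi$. Condition (ii) of Definition \ref{PCODEDEFINITION}, i.e.\ Inequality (\ref{NE}), gives $a_{jk}\geq 2(1-\cos\theta)$ in the order of $\mathcal{A}$, where the scalar means $2(1-\cos\theta)\cdot 1_{\mathcal{A}}$. Hypothesis (ii) of the theorem then yields $\phi(a_{jk})\leq -c$. There are $n(n-1)$ ordered pairs with $j\neq k$.

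\textbf{Combining.} Inserting both estimates into hypothesis (i) gives
\begin{align*}
0\leq \sum_{1\leq j,k\leq n}\phi(\langle \tau_j-\tau_k, \tau_j-\tau_k\rangle)\leq n\phi(0)-n(n-1)c .
\end{align*}
Dividing by $n>0$ gives $0\leq \phi(0)-(n-1)c$. Since $c>0$, this rearranges to $n\leq (\phi(0)+c)/c$. The particular case follows at once: if $\phi(0)+c\leq 1$, then $(\phi(0)+c)/c\leq 1/c$.

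\textbf{Main obstacle.} There is no genuine analytic obstacle, because the hypotheses on $\phi$ are phrased directly in terms of the order on $\mathcal{A}^+$. The only point needing care is matching the hypothesis to the code condition. Condition (ii) of the theorem is stated for $a\geq 2(1-\cos\theta)$ in the operator order, and Inequality (\ref{NE}) supplies exactly this order inequality. The weaker norm inequality (\ref{PN}) would not suffice, which is precisely why the noncommutative code was defined through (\ref{NE}). We also check that $0$ is positive, so that $\phi(0)$ is defined: $0=0^*0$.
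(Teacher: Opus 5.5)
Your proposal is correct and matches the paper's proof: both split the double sum into the diagonal, which contributes $n\phi(0)$, and the off-diagonal part, bounded using Inequality (\ref{NE}) and hypothesis (ii). The paper merely shifts to $\psi=\phi+c$ and writes the conclusion as $cn^2\leq\sum_{j,k}\psi(\langle \tau_j-\tau_k,\tau_j-\tau_k\rangle)\leq n(\phi(0)+c)$, which is the same inequality as your $0\leq n\phi(0)-n(n-1)c$.
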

\begin{proof}
	Define $\psi:\mathcal{A}^+\ni a \mapsto \psi(a)\coloneqq \phi(a)+c\in \mathbb{R}$. Then 
	\begin{align*}
		\sum_{1\leq j,k \leq n}\psi(\langle \tau_j-\tau_k, \tau_j-\tau_k\rangle)&=\sum_{j=1}^{n}\psi(0)+\sum_{1\leq j,k \leq n, j \neq k}\psi(\langle \tau_j-\tau_k, \tau_j-\tau_k\rangle)\\
		&=n(\phi(0)+c)+\sum_{1\leq j,k \leq n, j \neq k}(\phi(\langle \tau_j-\tau_k, \tau_j-\tau_k\rangle)+c)\\
		&\leq n(\phi(0)+c)+0=n(\phi(0)+c).
	\end{align*}
	We also have 
	\begin{align*}
		\sum_{1\leq j,k \leq n}\psi(\langle \tau_j-\tau_k, \tau_j-\tau_k\rangle)=\sum_{1\leq j,k \leq n}(\phi(\langle \tau_j-\tau_k, \tau_j-\tau_k\rangle)+c)=\sum_{1\leq j,k \leq n}\phi(\langle \tau_j-\tau_k, \tau_j-\tau_k\rangle)+cn^2.
	\end{align*}
	Therefore 
	\begin{align*}
		cn^2\leq \sum_{1\leq j,k \leq n}\phi(\langle \tau_j-\tau_k, \tau_j-\tau_k\rangle)+cn^2=\sum_{1\leq j,k \leq n}\psi(\langle \tau_j-\tau_k, \tau_j-\tau_k\rangle)\leq n(\phi(0)+c).
	\end{align*}
\end{proof}
\begin{corollary} (\textbf{Noncommutative Delsarte-Goethals-Seidel-Kabatianskii-Levenshtein-Pfender Kissing Number Bound})
Let $\{\tau_j\}_{j=1}^n$  be a  noncommutative $(d,n,\pi/3 )$-spherical code in $\mathcal{A}^d$. Let $c\in (0, \infty)$ and $\phi:\mathcal{A}^+\to \mathbb{R}$ be a function satisfying following.
\begin{enumerate}[\upshape(i)]
	\item $\sum_{1\leq j,k \leq n}\phi(\langle \tau_j-\tau_k, \tau_j-\tau_k\rangle)\geq 0$.
	\item $\phi(a)+c\leq 0$ for all $ a\in \mathcal{A}^+ $  with  $ a \geq 1$.
\end{enumerate}
Then 
\begin{align*}
	n\leq \frac{\phi(0)+c}{c}.
\end{align*}
In particular, if $\phi(0)+c\leq 1$, then $n\leq 1/c$.		
\end{corollary}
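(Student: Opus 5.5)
This corollary is the special case $\theta=\pi/3$ of Theorem \ref{PDGSP}, so the plan is simply to specialize. Since $\cos(\pi/3)=1/2$, we have $2(1-\cos\theta)=2(1-1/2)=1$. Hence a noncommutative $(d,n,\pi/3)$-spherical code is exactly a family with $\langle\tau_j,\tau_j\rangle=1$ and
\begin{align*}
\langle \tau_j-\tau_k,\tau_j-\tau_k\rangle\geq 1, \quad j\neq k,
\end{align*}
where $1$ is the unit of $\mathcal{A}$. Likewise, hypothesis (ii) of the corollary, namely $\phi(a)+c\leq 0$ for all $a\in\mathcal{A}^+$ with $a\geq 1$, is literally hypothesis (ii) of Theorem \ref{PDGSP} for this $\theta$. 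Hypothesis (i) is identical in both statements. Invoking Theorem \ref{PDGSP} then gives $n\leq (\phi(0)+c)/c$ directly.

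For the ``in particular'' clause: if $\phi(0)+c\leq 1$, then since $c>0$, dividing gives $(\phi(0)+c)/c\leq 1/c$, so $n\leq 1/c$.

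For a self-contained argument, one could instead rerun the proof of Theorem \ref{PDGSP}. Set $\psi=\phi+c$. The diagonal terms contribute $n\psi(0)$. Each off-diagonal term satisfies $\psi\leq 0$, because $\langle\tau_j-\tau_k,\tau_j-\tau_k\rangle$ is positive and $\geq 1$. This gives
\begin{align*}
cn^2\leq \sum_{j,k}\psi(\langle \tau_j-\tau_k,\tau_j-\tau_k\rangle)\leq n(\phi(0)+c),
\end{align*}
and dividing by $cn$ finishes.

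There is no real obstacle. The only point to check is that the scalar $2(1-\cos\theta)$ in the order relation on $\mathcal{A}^+$ becomes the unit $1$, which it does exactly.
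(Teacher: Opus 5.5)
Your proposal is correct and follows the paper's approach: the corollary is Theorem \ref{PDGSP} with $\theta=\pi/3$, where $2(1-\cos(\pi/3))=1$ is the unit of $\mathcal{A}$. Your self-contained rerun with $\psi=\phi+c$ reproduces the paper's proof of that theorem step for step.
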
	
Following generalization of Theorem \ref{PDGSP} is clear.
\begin{theorem}
	Let $\{\tau_j\}_{j=1}^n$  be a  noncommutative $(d,n,\theta )$-spherical code in $\mathcal{A}^d$. Let $c\in (0, \infty)$ and 
	\begin{align*}
		\phi:\{\langle \tau_j-\tau_k, \tau_j-\tau_k\rangle:1\leq j, k \leq n\}\to \mathbb{R}	
	\end{align*}
	be a function satisfying following.
	\begin{enumerate}[\upshape(i)]
		\item $\sum_{1\leq j,k \leq n}\phi(\langle \tau_j-\tau_k, \tau_j-\tau_k\rangle)\geq 0$.
		\item $\phi(a)+c\leq 0$ for all $ a \in \{\langle \tau_j-\tau_k, \tau_j-\tau_k\rangle:1\leq j, k \leq n, j \neq k\}$.
	\end{enumerate}
	Then 
	\begin{align*}
		n\leq \frac{\phi(0)+c}{c}.
	\end{align*}
	In particular, if $\phi(0)+c\leq 1$, then $n\leq 1/c$.	
\end{theorem}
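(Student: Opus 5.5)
The plan is to repeat the double-counting argument from the proof of Theorem \ref{PDGSP} verbatim. The only changes are that $\phi$ now lives on the finite set $S\coloneqq\{\langle \tau_j-\tau_k, \tau_j-\tau_k\rangle:1\leq j,k\leq n\}$, and that hypothesis (ii) is imposed directly on the off-diagonal elements of $S$. First note that $0\in S$, since $\langle \tau_j-\tau_j,\tau_j-\tau_j\rangle=0$. Hence $\phi(0)$ is defined and the statement makes sense. Define $\psi:S\to\mathbb{R}$ by $\psi(a)\coloneqq\phi(a)+c$.

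Next, estimate $\Sigma\coloneqq\sum_{1\leq j,k\leq n}\psi(\langle \tau_j-\tau_k, \tau_j-\tau_k\rangle)$ from above and from below.

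For the upper bound, split $\Sigma$ into diagonal and off-diagonal terms. The $n$ diagonal terms each equal $\psi(0)=\phi(0)+c$. Each off-diagonal term has argument in $\{\langle \tau_j-\tau_k, \tau_j-\tau_k\rangle: j\neq k\}$, so by hypothesis (ii) it is $\leq 0$. Thus $\Sigma\leq n(\phi(0)+c)$.

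For the lower bound, expand the definition of $\psi$ to get $\Sigma=\sum_{j,k}\phi(\langle \tau_j-\tau_k, \tau_j-\tau_k\rangle)+cn^2$. By hypothesis (i) this is $\geq cn^2$.

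Combining the two estimates gives $cn^2\leq n(\phi(0)+c)$. Dividing by $cn>0$ yields $n\leq(\phi(0)+c)/c$. The particular case follows at once: if $\phi(0)+c\leq 1$, then $n\leq 1/c$.

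There is no genuine obstacle; note that the modular-code property (\ref{NE}) is not even used here, since hypothesis (ii) is stated directly on the off-diagonal elements of $S$. The one point needing care is that $\phi$ be well defined as a function on $S$. If $\langle \tau_j-\tau_k,\tau_j-\tau_k\rangle=0$ for some $j\neq k$, then $0$ would be an off-diagonal element, and (ii) would force $\phi(0)+c\leq 0$, contradicting $cn^2\leq n(\phi(0)+c)$ with $n\ge1$. So the hypotheses are then inconsistent and the statement holds vacuously. Equivalently, this situation cannot occur for a genuine code with $\theta\neq 0$, and either way the argument is unaffected.
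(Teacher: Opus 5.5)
Your proposal is correct and is exactly the argument the paper intends. The paper gives no separate proof and calls the result a clear generalization of Theorem \ref{PDGSP}; you reproduce that theorem's proof verbatim, splitting $\sum_{j,k}(\phi+c)(\langle \tau_j-\tau_k,\tau_j-\tau_k\rangle)$ into diagonal and off-diagonal parts to obtain $cn^2\le n(\phi(0)+c)$. Your side remark about $0$ occurring as an off-diagonal value is harmless but unnecessary: the same chain of inequalities already covers that case, where it simply shows the hypotheses cannot all hold.
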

We remark that all results in the Gegenbauer polynomials use the real numbers properties such as commutativity, inverse of nonzero numbers, Lebesgue measure, and total order properties of real numbers. As these are not available for C*-algebras, Gegenbauer polynomials cannot be used in the noncommutative setting.

\section{Conclusion}
We made a far-reaching generalization of spherical codes by introducing noncommutative spherical codes. We showed that the Pfender bound obtained for spherical codes extends to Hilbert C*-modules. Unlike the real case, we observed that noncommutative spherical codes are extremely hard to resolve. As spherical codes themselves are not completely understood after five centuries, we believe that noncommutative spherical codes will not be completely understood even after millennia. Therefore, the article will be in continuous citation and reference forever.

\section{Acknowledgments} 
The author thanks the anonymous reviewer for his/her several suggestions, which improved the article.

 \bibliographystyle{plain}
 \bibliography{reference.bib}

\begin{thebibliography}{10}

\bibitem{ANSTREICHER}
Kurt~M. Anstreicher.
\newblock The thirteen spheres: a new proof.
\newblock {\em Discrete Comput. Geom.}, 31(4):613--625, 2004.

\bibitem{BACHOCVELLENTIN}
Christine Bachoc and Frank Vallentin.
\newblock New upper bounds for kissing numbers from semidefinite programming.
\newblock {\em J. Amer. Math. Soc.}, 21(3):909--924, 2008.

\bibitem{BACHOCVALLENTIN2}
Christine Bachoc and Frank Vallentin.
\newblock Semidefinite programming, multivariate orthogonal polynomials, and
  codes in spherical caps.
\newblock {\em European J. Combin.}, 30(3):625--637, 2009.

\bibitem{BANNAIBANNAI}
Eiichi Bannai and Etsuko Bannai.
\newblock A survey on spherical designs and algebraic combinatorics on spheres.
\newblock {\em European J. Combin.}, 30(6):1392--1425, 2009.

\bibitem{BANNAISLOANE}
Eiichi Bannai and N.~J.~A. Sloane.
\newblock Uniqueness of certain spherical codes.
\newblock {\em Canadian J. Math.}, 33(2):437--449, 1981.

\bibitem{BARGMUSIN}
Alexander Barg and Oleg~R. Musin.
\newblock Codes in spherical caps.
\newblock {\em Adv. Math. Commun.}, 1(1):131--149, 2007.

\bibitem{BOROCZKY}
K\'{a}roly B\"{o}r\"{o}czky.
\newblock The {N}ewton-{G}regory problem revisited.
\newblock In {\em Discrete geometry}, volume 253, pages 103--110. Dekker, New
  York, 2003.

\bibitem{BOROCZKYGLAZYRIN}
K\'{a}roly~J. B\"{o}r\"{o}czky and Alexey Glazyrin.
\newblock Stability of optimal spherical codes.
\newblock {\em Monatsh. Math.}, 205(3):455--475, 2024.

\bibitem{BOYVALENKOVDRAGNEVHARDINSTOYANOVA2}
P.~G. Boyvalenkov, P.~D. Dragnev, D.~P. Hardin, E.~B. Saff, and M.~M.
  Stoyanova.
\newblock Universal lower bounds for potential energy of spherical codes.
\newblock {\em Constr. Approx.}, 44(3):385--415, 2016.

\bibitem{BOYVALENKOVDRAGNEVHARDINSTOYANOVA}
P.~G. Boyvalenkov, P.~D. Dragnev, D.~P. Hardin, E.~B. Saff, and M.~M.
  Stoyanova.
\newblock Bounds for spherical codes: the {L}evenshtein framework lifted.
\newblock {\em Math. Comp.}, 90(329):1323--1356, 2021.

\bibitem{BOYVALENKOVDANEV}
Peter Boyvalenkov and Danyo Danev.
\newblock On maximal codes in polynomial metric spaces.
\newblock In {\em Applied algebra, algebraic algorithms and error-correcting
  codes}, volume 1255 of {\em Lecture Notes in Comput. Sci.}, pages 29--38.
  Springer, Berlin, 1997.

\bibitem{BOYVALENKOVDANEVLANDGEV}
Peter Boyvalenkov, Danyo Danev, and Ivan Landgev.
\newblock On maximal spherical codes. {II}.
\newblock {\em J. Combin. Des.}, 7(5):316--326, 1999.

\bibitem{BOYVALENKOVDODUNEKOVMUSIN}
Peter Boyvalenkov, Stefan Dodunekov, and Oleg Musin.
\newblock A survey on the kissing numbers.
\newblock {\em Serdica Math. J.}, 38(4):507--522, 2012.

\bibitem{CASSELMAN}
Bill Casselman.
\newblock The difficulties of kissing in three dimensions.
\newblock {\em Notices Amer. Math. Soc.}, 51(8):884--885, 2004.

\bibitem{COHNJIAOKUMARTORQUATO}
Henry Cohn, Yang Jiao, Abhinav Kumar, and Salvatore Torquato.
\newblock Rigidity of spherical codes.
\newblock {\em Geom. Topol.}, 15(4):2235--2273, 2011.

\bibitem{COHNZHAO}
Henry Cohn and Yufei Zhao.
\newblock Sphere packing bounds via spherical codes.
\newblock {\em Duke Math. J.}, 163(10):1965--2002, 2014.

\bibitem{CONWAYSLOANE}
J.~H. Conway and N.~J.~A. Sloane.
\newblock {\em Sphere packings, lattices and groups}.
\newblock Springer-Verlag, New York, 1999.

\bibitem{DELSARTEGOETHALSSEIDEL}
P.~Delsarte, J.~M. Goethals, and J.~J. Seidel.
\newblock Spherical codes and designs.
\newblock {\em Geometriae Dedicata}, 6(3):363--388, 1977.

\bibitem{ERICSONZINOVIEV}
Thomas Ericson and Victor Zinoviev.
\newblock {\em Codes on {E}uclidean spheres}, volume~63 of {\em North-Holland
  Mathematical Library}.
\newblock North-Holland Publishing Co., Amsterdam, 2001.

\bibitem{GLAZYRIN}
Alexey Glazyrin.
\newblock A short solution of the kissing number problem in dimension three.
\newblock {\em Discrete Comput. Geom.}, 69(3):931--935, 2023.

\bibitem{JENSSENJOOSPERKINS}
Matthew Jenssen, Felix Joos, and Will Perkins.
\newblock On kissing numbers and spherical codes in high dimensions.
\newblock {\em Adv. Math.}, 335:307--321, 2018.

\bibitem{KALLALKANWANG}
Kenz Kallal, Tomoka Kan, and Eric Wang.
\newblock Improved lower bounds for kissing numbers in dimensions 25 through
  31.
\newblock {\em SIAM J. Discrete Math.}, 31(3):1895--1908, 2017.

\bibitem{KUKLIN}
N.~A. Kuklin.
\newblock Delsarte method in the problem on kissing numbers in high-dimensional
  spaces.
\newblock {\em Proc. Steklov Inst. Math.}, 284(1):S108--S123, 2014.

\bibitem{LEECH}
John Leech.
\newblock The problem of the thirteen spheres.
\newblock {\em Math. Gaz.}, 40:22--23, 1956.

\bibitem{LIBERTI}
Leo Liberti.
\newblock Mathematical programming bounds for kissing numbers.
\newblock In {\em Optimization and decision science: methodologies and
  applications}, volume 217 of {\em Springer Proc. Math. Stat.}, pages
  213--222. Springer, Cham, 2017.

\bibitem{MACHADOFILHO}
Fabr\'{\i}cio~Caluza Machado and Fernando~M\'{a}rio de~Oliveira~Filho.
\newblock Improving the semidefinite programming bound for the kissing number
  by exploiting polynomial symmetry.
\newblock {\em Exp. Math.}, 27(3):362--369, 2018.

\bibitem{MAEHARA}
H.~Maehara.
\newblock The problem of thirteen spheres---a proof for undergraduates.
\newblock {\em European J. Combin.}, 28(6):1770--1778, 2007.

\bibitem{MITTELMANNVALLENTIN}
Hans~D. Mittelmann and Frank Vallentin.
\newblock High-accuracy semidefinite programming bounds for kissing numbers.
\newblock {\em Experiment. Math.}, 19(2):175--179, 2010.

\bibitem{MUSIN3}
O.~R. Musin.
\newblock Bounds for codes by semidefinite programming.
\newblock {\em Tr. Mat. Inst. Steklova}, 263:143--158, 2008.

\bibitem{MUSIN1}
Oleg~R. Musin.
\newblock The kissing problem in three dimensions.
\newblock {\em Discrete Comput. Geom.}, 35(3):375--384, 2006.

\bibitem{MUSIN2}
Oleg~R. Musin.
\newblock The kissing number in four dimensions.
\newblock {\em Ann. of Math. (2)}, 168(1):1--32, 2008.

\bibitem{ODLYZKOSLOANE}
A.~M. Odlyzko and N.~J.~A. Sloane.
\newblock New bounds on the number of unit spheres that can touch a unit sphere
  in {$n$} dimensions.
\newblock {\em J. Combin. Theory Ser. A}, 26(2):210--214, 1979.

\bibitem{PFENDER}
Florian Pfender.
\newblock Improved {D}elsarte bounds for spherical codes in small dimensions.
\newblock {\em J. Combin. Theory Ser. A}, 114(6):1133--1147, 2007.

\bibitem{PFENDERZIEGLER}
Florian Pfender and G\"{u}nter~M. Ziegler.
\newblock Kissing numbers, sphere packings, and some unexpected proofs.
\newblock {\em Notices Amer. Math. Soc.}, 51(8):873--883, 2004.

\bibitem{SAMORODNITSKY}
Alex Samorodnitsky.
\newblock On linear programming bounds for spherical codes and designs.
\newblock {\em Discrete Comput. Geom.}, 31(3):385--394, 2004.

\bibitem{SARDARIZARGAR}
Naser~Talebizadeh Sardari and Masoud Zargar.
\newblock New upper bounds for spherical codes and packings.
\newblock {\em Math. Ann.}, 389(4):3653--3703, 2024.

\bibitem{SCHUTTEVANDERWAERDEN}
K.~Sch\"{u}tte and B.~L. van~der Waerden.
\newblock Das {P}roblem der dreizehn {K}ugeln.
\newblock {\em Math. Ann.}, 125:325--334, 1953.

\bibitem{SLOANE}
N.~J.~A. Sloane.
\newblock Tables of sphere packings and spherical codes.
\newblock {\em IEEE Trans. Inform. Theory}, 27(3):327--338, 1981.

\bibitem{WEGGEOLSEN}
Niels~Erik Wegge-Olsen.
\newblock {\em {{\(K\)}}-theory and {{\(C^*\)}}-algebras: a friendly approach}.
\newblock Oxford: Oxford University Press, 1993.

\bibitem{ZONG}
Chuanming Zong.
\newblock {\em Sphere packings}.
\newblock Universitext. Springer-Verlag, New York, 1999.

\end{thebibliography}

\end{document}